\newtheorem{theorem}{Theorem}[section]
\newtheorem{corollary}[theorem]{Corollary}
\theoremstyle{definition}
\newtheorem{definition}[theorem]{Definition}
\newtheorem{example}[theorem]{Example}
\theoremstyle{remark}
\newtheorem{remark}[theorem]{Remark}
\numberwithin{equation}{section}
\begin{document}

\title{Quasi-representations of Finsler modules over $C^*$-algebras}

\author[M. Amyari, M. Chakoshi, M. S. Moslehian]{M. Amyari$^1$$^*$, M. Chakoshi $^1$ and M. S. Moslehian$^2$}
\address{$^1$ Department of Mathematics, Faculty of
science, Islamic Azad University-Mashhad Branch, Mashhad 91735,
Iran.} \email{amyari@mshdiau.ac.ir and $Maryam_-$Amyari@yahoo.com}
\email{m-chakoshi@mshdiau.ac.ir }
\address{$^2$ Department of Pure Mathematics, Center of Excellence in Analysis on Algebraic Structures (CEAAS), Ferdowsi University of
Mashhad, P.O. Box 1159, Mashhad 91775, Iran.}
\email{moslehian@ferdowsi.um.ac.ir and moslehian@member.ams.org}

\subjclass[2010]{Primary 46L08 Secondary 46L05.}

\keywords{Finsler module, quasi-representation, $\varphi$-morphism,
nondegenerate quasi-representation, irreducible
quasi-representation.\\
$^*$ Corresponding author}

\begin{abstract}
We show that every Finsler module over a $C^*$-algebra has a
quasi-representation into the Banach space
$\mathbb{B}(\mathscr{H},\mathscr{K})$ of all bounded linear
operators between some Hilbert spaces $\mathscr{H}$ and
$\mathscr{K}$. We define the notion of completely positive
$\varphi$-morphism and establish a Stinespring type theorem in the
framework of Finsler modules over $C^*$-algebras. We also
investigate the nondegeneracy and the irreducibility of
quasi-representations.
\end{abstract}

\maketitle

\section{INTRODUCTION}

The notation of Finsler module is an interesting generalization of
that of Hilbert $C^*$-module. It is a useful tool in the operator
theory and the theory of operator algebras and may be served as a
noncommutative version of the concept of Banach bundle, which is an
essential concept in the Finsler geometry. In 1995 Phillips and
Weaver \cite{ph} showed that if a $C^*$-algebra $\mathscr{A}$ has no
nonzero commutative ideal, then any Finsler $\mathscr{A}$-module
must be a Hilbert $C^*$-module. If $\mathscr{A}$ is the commutative
$C^*$-algebra $C_0(X)$ of all continuous complex-valued functions
vanishing at infinity on a locally compact Hausdorff space $X$, then
any Finsler $\mathscr{A}$-module is isomorphic to the module of
continuous sections of a bundle of Banach spaces over $X$. The
concept of a $\varphi$-morphism between Finsler modules was
introduced in \cite{M}.

The Gelfand--Naimark--Segal (GNS) representation theorem is one of
the most useful theorems, which is applied in operator algebras and
mathematical physics. That provides a procedure to construct
representations of $C^*$-algebras. A generalization of GNS
construction to a topological $*$-algebra established by Borchers,
Uhlmann and Powers leading to unbounded $*$-representations of
$*$-algebras; see \cite{po}. Another is a generalization of a
positive linear functional to a completely positive map studied by
Stinespring \cite{st}, see also \cite{JOI1}.

Let $\mathscr{A}$ be a $C^*$-algebra and let $\mathscr{A}^+$ denote
the positive cone of all positive elements of $\mathscr{A}$. We
define a Finsler $\mathscr{A}$-module to be a right
$\mathscr{A}$-module $\mathscr{E}$ equipped with a map
$\rho: \mathscr{E} \to \mathscr{A}^+$ (denoted by $\rho_\mathscr{A}$ if there is an ambiguity) satisfying the following conditions:\\
(i) The map $\|\cdot\|_\mathscr{E}: x\longmapsto \|\rho(x)\|$ makes $\mathscr{E}$ into a Banach space.\\
(ii) $\rho(xa)^2=a^*\rho(x)^2 a$, for all $a \in \mathscr{A}$ and $x \in \mathscr{E}.$\\

A Finsler module $\mathscr{E}$ over a $C^*$-algebra $\mathscr{A}$ is
said to be full if the linear span of $\{\rho(x)^2: x\in
\mathscr{E}\}$ is dense in $\mathscr{A}$. For example, if
$\mathscr{E}$ is a (full) Hilbert $C^*$-module over $\mathscr{A}$
(see \cite{MT}), then $\mathscr{E}$ together with $\rho (x) =
<x,x>^{\frac{1}{2}}$ is a (full) Finsler module over $\mathscr{A}$,
since
\begin{eqnarray*}
\rho(xa)^2 = <xa,xa>=a^*<x,x>a =a^*\rho(x)^2 a.
\end{eqnarray*}
In particular, every $C^*$-algebra $\mathscr{A}$ is a full Finsler
module over $\mathscr{A}$ under the mapping $\rho(x)=(x^*x
)^{\frac{1}{2}}$.

Our goal is to extend the notion of a representation of a Hilbert
$C^*$-module to the framework of Finsler $\mathscr{A}$-modules. We
show that every Finsler $\mathscr{A}$-module has a
quasi-representation into the Banach space
$\mathbb{B}(\mathscr{H},\mathscr{K})$ of all bounded linear
operators between some Hilbert spaces $\mathscr{H}$ and
$\mathscr{K}$. We define the notion of completely positive
$\varphi$-morphism and establish a Stinespring type theorem in the
framework of Finsler modules over $C^*$-algebras. We also introduce
the notions of the nondegeneracy and the irreducibility of
quasi-representations and study some interrelations between them.

\section{QUASI-REPRESENTATIONS OF FINSLER MODULES}

We start our work by giving the definition of a $\varphi$-morphism
of a Finsler module.

\begin{definition} Suppose that $(\mathscr{E}, \rho_{\mathscr{A}} )$ and $(\mathscr{F}, \rho_{\mathscr{B}})$ are Finsler modules over $C^*$-algebras $\mathscr{A}$ and $\mathscr{B}$, respectively and $\varphi: \mathscr{A}\to \mathscr{B}$ is a $*$-homomorphism of $C^*$-algebras. A (not necessarily linear) map $\Phi: \mathscr{E} \to \mathscr{F}$ is said to be a $\varphi$-morphism of Finsler modules if the following conditions are satisfied:

(i) $\rho_\mathscr{B} (\Phi(x))=\varphi(\rho_{\mathscr{A}} (x))$;

(ii) $\Phi(xa)=\Phi(x)\varphi(a)$.\\
for all $x\in \mathscr{E}$ and $a\in \mathscr{A}$.  In the case of
Hilbert $C^*$-modules, $\Phi$ is assumed to be linear and then
condition (ii) is deduced from (i).
\end{definition}
\noindent Now we introduce the notion of a quasi-representation of a
Finsler module. Due to $\mathbb{B}(\mathscr{H},\mathscr{K})$ is a
Hilbert $C^*$-module over $\mathbb{B}(\mathscr{H})$ via $\langle T,S
\rangle=T^*S$, we can endow $\mathbb{B}(\mathscr{H},\mathscr{K})$ a
Finsler structure by
\begin{equation}
\rho_0(T)=(T^*T)^{\frac{1}{2}}.\label{10}
\end{equation}

\begin{definition}
Let $(\mathscr{E}, \rho)$ be a Finsler module over a $C^*$-algebra
$\mathscr{A}$. A map $\Phi: \mathscr{E} \to
\mathbb{B}(\mathscr{H},\mathscr{K})$, where $\varphi: \mathscr{A}\to
\mathbb{B}(\mathscr{H})$ is a representation of $\mathscr{A}$ is
called a quasi-representation of $\mathscr{E}$ if
$\rho_0(\Phi(x))=\varphi(\rho(x))$ for all $x\in \mathscr{E}$.
\end{definition}

We are going to show that for every Finsler $\mathscr{A}$-module
there is a quasi-representation to
$\mathbb{B}(\mathscr{H},\mathscr{K})$ for some Hilbert spaces
$\mathscr{H}$ and $\mathscr{K}$, see also \cite{mor}.

\begin{theorem} \label{main}
Suppose $\mathscr{E}$ is a Finsler $\mathscr{A}$-module with the
associated map $\rho: \mathscr{E}\to \mathscr{A}^+$. Then there is a
quasi-representation $\Phi: \mathscr{E}\to
\mathbb{B}(\mathscr{H},\mathscr{K})$ for some Hilbert spaces
$\mathscr{H}$ and $\mathscr{K}$.\label{3}
\end{theorem}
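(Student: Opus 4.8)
The defining requirement $\rho_0(\Phi(x))=\varphi(\rho(x))$ is equivalent, after squaring, to the single operator identity $\Phi(x)^*\Phi(x)=\varphi(\rho(x))^2=\varphi(\rho(x)^2)$, since $\rho(x)\in\mathscr{A}^+$ and $\varphi$ is a $*$-homomorphism. So the plan is to manufacture a representation $\varphi$ of $\mathscr{A}$ together with operators $\Phi(x)$ realizing this identity. First I would fix a representation $\varphi:\mathscr{A}\to\mathbb{B}(\mathscr{H})$; the universal representation, or any faithful one obtained by summing GNS representations, will do, and this is the $\varphi$ demanded by the definition of a quasi-representation.

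The model to imitate is the interior tensor product used to represent a Hilbert $C^*$-module: one sets $\mathscr{K}=\mathscr{E}\otimes_\varphi\mathscr{H}$ with inner product $\langle x\otimes\xi,y\otimes\eta\rangle=\langle\xi,\varphi(\langle x,y\rangle)\eta\rangle$ and defines $\Phi(x)\xi=x\otimes\xi$, whence $\Phi(x)^*\Phi(x)=\varphi(\langle x,x\rangle)$. The main obstacle is that a Finsler module carries no $\mathscr{A}$-valued inner product: only the diagonal quantity $\rho(x)^2$ (the analogue of $\langle x,x\rangle$) is available, while the off-diagonal values $\langle x,y\rangle$ needed to define the tensor-product inner product simply do not exist. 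Hence $\mathscr{K}$ cannot be formed by the naive tensor-product recipe, and this is precisely where the Finsler setting departs from the Hilbert module one.

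To get around this I would build $\mathscr{K}$ diagonally. For fixed $x$, the sesquilinear form $(\xi,\eta)\mapsto\langle\xi,\varphi(\rho(x)^2)\eta\rangle$ is positive semidefinite on $\mathscr{H}$ because $\varphi(\rho(x)^2)\ge 0$; quotienting by its null space and completing yields a Hilbert space $\mathscr{K}_x$ together with the canonical bounded map $V_x:\mathscr{H}\to\mathscr{K}_x$, $\xi\mapsto[\xi]$, which satisfies $\langle V_x\xi,V_x\eta\rangle=\langle\xi,\varphi(\rho(x)^2)\eta\rangle$, that is, $V_x^*V_x=\varphi(\rho(x)^2)$. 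Setting $\mathscr{K}=\bigoplus_{x\in\mathscr{E}}\mathscr{K}_x$ and letting $\Phi(x)$ be $V_x$ followed by the inclusion into the $x$-th summand gives $\Phi(x)^*\Phi(x)=V_x^*V_x=\varphi(\rho(x)^2)$. Even more economically one may simply take $\mathscr{K}=\mathscr{H}$ and $\Phi(x)=\varphi(\rho(x))$, for which $\Phi(x)^*\Phi(x)=\varphi(\rho(x))^2$ is immediate.

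It then remains only to read off the conclusion: taking positive square roots and using that the $*$-homomorphism $\varphi$ preserves them, one gets $\rho_0(\Phi(x))=(\Phi(x)^*\Phi(x))^{1/2}=(\varphi(\rho(x)^2))^{1/2}=\varphi(\rho(x))$ for every $x\in\mathscr{E}$, so $\Phi$ is the desired quasi-representation. The only genuinely delicate point is the one flagged above, namely that the absence of cross inner products forces one to abandon the literal tensor-product construction in favour of the diagonal positive-form argument; the remaining verifications are routine manipulations with positive elements and $*$-homomorphisms.
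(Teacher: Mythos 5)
Your construction is correct and is essentially the paper's: your direct sum $\bigoplus_{x\in\mathscr{E}}\mathscr{K}_x$ with $\Phi(x)=\iota_x V_x$ is, up to the canonical identification of $\mathscr{K}_x$ with $\overline{\varphi(\rho(x))\mathscr{H}}$, exactly the paper's Hilbert space of finitely supported maps $\mathscr{E}\to\mathscr{H}$ with $\Phi(x)h=\varphi(\rho(x))h_x$, and both arguments reduce to the same identity $\Phi(x)^*\Phi(x)=\varphi(\rho(x)^2)$. Your further remark that one may even take $\mathscr{K}=\mathscr{H}$ and $\Phi(x)=\varphi(\rho(x))$ is also valid as written, since Definition 2.2 imposes only the single condition $\rho_0(\Phi(x))=\varphi(\rho(x))$ and no module-map requirement.
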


\begin{proof} By the Gelfand--Naimark theorem for $C^*$-algebras, there is a representation $\varphi: \mathscr{A}\to \mathbb{B}(\mathscr{H})$ for some Hilbert space $\mathscr{H}$. We want to construct a Hilbert space $\mathscr{K}$. Put
\begin{eqnarray*}
\mathscr{K}_0:= {\rm span}\{\varphi(a)f: a\in\mathscr{A}, f:
\mathscr{E}\to \mathscr{H} {\rm ~is~ a~ map~ with~ a~ finite~
support} \}
\end{eqnarray*}
and define on $\mathscr{K}_0$ an inner product by
\begin{eqnarray*}
\langle \varphi(a)f, \varphi(b)g \rangle = \sum_{x\in \mathscr{E}}
\langle \varphi(a)f(x), \varphi(b)g(x) \rangle.
\end{eqnarray*}
Note that if $\langle \sum_{i=1}^{n}\varphi(a_i)f_i,
\sum_{i=1}^{n}\varphi(a_i)f_i \rangle=0$, then
$$\sum_{x\in \mathscr{E}} \langle \sum_{i=1}^{n}\varphi(a_i)f_i (x), \sum_{i=1}^{n}\varphi(a_i)f_i (x)\rangle=0\,.$$ Thus $\sum_{i=1}^{n}\varphi(a_i)f_i (x)=0$ for each $x\in \mathscr{E}$, whence $\sum_{i=1}^{n}\varphi(a_i)f_i=0$.\\
Let us consider the closure $\overline{\mathscr{K}_0}$ of
$\mathscr{K}_0$ to get a Hilbert space, which is denoted by
$\mathscr{K}.$ For any $y\in \mathscr{E}$ and $h\in \mathscr{H}$,
the map $h_y: \mathscr{E}\to \mathscr{H}$ defined by
\begin{eqnarray*}
h_y(x)=\begin{cases}
h& x=y \\
0& x\neq y
\end{cases}
\end{eqnarray*}
has a finite support. For $x\in \mathscr{E}$, define $\Phi(x):
\mathscr{H} \to \mathscr{K}$ by $\Phi(x)h=\varphi(\rho(x))h_x$. We
show that $\Phi(x)\in \mathbb{B}(\mathscr{H},\mathscr{K})$. Clearly
$\Phi(x)$ is linear. Also $\Phi(x)$ is bounded, since
\begin{eqnarray*}
\|\Phi(x)h\|^2&=&\langle\Phi(x)h, \Phi(x)h\rangle=\langle
\varphi(\rho(x))h_x, \varphi(\rho(x))h_x\rangle\\&=&
\sum_{y\in \mathscr{E}}\langle \varphi(\rho(x))h_x(y), \varphi(\rho(x))h_x(y)\rangle=\langle\varphi(\rho(x))h, \varphi(\rho(x))h\rangle\\
&\leq& \|\varphi(\rho(x))\|^2\| h \|^2\,,
\end{eqnarray*}
whence $\| \Phi(x)\|\leq\|\varphi(\rho(x))\|$.\\
Further,
\begin{eqnarray*}
\langle\Phi(x)^*\Phi(x)h, h'\rangle &=& \langle\Phi(x)h~,\Phi(x)h'\rangle=\langle\varphi(\rho(x))h_x, \varphi(\rho(x))h'_x\rangle\\
&=&\sum_{y\in \mathscr{E}}\langle\varphi(\rho(x)) h_x(y), \varphi(\rho(x))h'_x(y)\rangle\\
&=& \langle \varphi(\rho(x))h, \varphi(\rho(x))h'\rangle=\langle
\varphi(\rho(x)^2) h, h'\rangle,
\end{eqnarray*}
for all $h,h'\in \mathscr{H}$ and $x\in \mathscr{E}$. Hence
$\Phi(x)^*\Phi(x)=\varphi(\rho(x)^2)$. Hence
\begin{equation}
(\Phi(x)^*\Phi(x))^{\frac{1}{2}}=\varphi(\rho(x)).\label{4}
\end{equation}

It follows from (\ref{10}) and equality (\ref{4}) that
$\rho_0(\Phi(x))=\varphi(\rho(x))$.
\end{proof}

\begin{remark}
If $\Phi$ is surjective and $\mathbb{B}(\mathscr{H},\mathscr{K})$ is
a full Finsler $\mathbb{B}(\mathscr{H})$-module, then by
\cite[Theorem 3.4(iv)]{M}, $\varphi$ is surjective.
\end{remark}

In the next section the notion of completely positive
$\varphi$-morphism is introduced and a construction of Stinespring's
theorem for Finsler modules is given.

\section{A STINESPRING TYPE THEOREM FOR FINSLER MODULES}

The Stinespring theorem was first introduced in the work of
Stinespring in 1995 that described the structure of completely
positive maps of a $C^*$-algebra into the $C^*$-algebra of all
bounded linear operators on a Hilbert space; see \cite{st}. Recently
Asadi \cite{As} proved this theorem for Hilbert $C^*$-modules.
Further, Bhat et al. \cite{Bha} improve the result of \cite{As} with
omitting a technical condition. In this section we intend to
establish a Stinespring type theorem in the framework of Finsler
modules over $C^*$-algebras.

\noindent A $\varphi$-morphism $\Phi: \mathscr{E} \to
\mathbb{B}(\mathscr{H},\mathscr{K})$ is called completely positive
if the map $\varphi: \mathscr{A} \to \mathbb{B} (\mathscr{H})$ is
completely positive.

\begin{theorem}
Let $(\mathscr{E}, \rho)$ be a Finsler module over a unital
$C^*$-algebra $\mathscr{A}$, let $\mathscr{H}, \mathscr{K}$ be
Hilbert spaces and let $\Phi: \mathscr{E} \to
\mathbb{B}(\mathscr{H},\mathscr{K})$ be a completely positive map
associated to a completely positive map $\varphi: \mathscr{A} \to
\mathbb{B} (\mathscr{H})$. Then there exist Hilbert spaces
$\mathscr{H}', \mathscr{K}'$ and isometries $V: \mathscr{H} \to
\mathscr{H}', W: \mathscr{K} \to \mathscr{K'}$, a $*$-homomorphism
$\theta: \mathscr{A} \to \mathbb{B} (\mathscr{H}')$ and a
$\theta$-morphism $\Psi: \mathscr{E} \to
\mathbb{B}(\mathscr{H}',\mathscr{K}')$ such that $\varphi(a)=V^*
\theta(a) V, \Phi(x)=W^* \Psi(x) V$ for all $x\in \mathscr{E}$ and
$a\in \mathscr{A}$.
\end{theorem}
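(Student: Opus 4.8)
The plan is to reduce the statement to two linked dilation constructions: first the classical Stinespring dilation of the completely positive map $\varphi$, and then a tensoring construction that dilates $\Phi$ over the enlarged representation, in the spirit of the Asadi--Bhat argument for Hilbert $C^{*}$-modules and of Theorem \ref{main}.

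First I would dilate $\varphi$. Since $\mathscr{A}$ is unital and $\varphi$ is completely positive, I form the algebraic tensor product $\mathscr{A}\odot\mathscr{H}$ and equip it with the sesquilinear form $\langle a\otimes h, b\otimes k\rangle=\langle\varphi(b^{*}a)h,k\rangle$, which is positive semidefinite precisely because $\varphi$ is completely positive. Quotienting by the null space and completing yields a Hilbert space $\mathscr{H}'$; setting $\theta(c)[a\otimes h]=[ca\otimes h]$ defines a $*$-homomorphism $\theta:\mathscr{A}\to\mathbb{B}(\mathscr{H}')$, and $Vh=[1\otimes h]$ defines $V:\mathscr{H}\to\mathscr{H}'$ with $V^{*}\theta(a)V=\varphi(a)$ and $V^{*}V=\varphi(1)$; the asserted isometry property of $V$ corresponds to the normalization $\varphi(1)=I$, which I assume throughout. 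I also record the minimality identity $\mathscr{H}'=\overline{\theta(\mathscr{A})V\mathscr{H}}$, together with the contraction estimate $\|V^{*}\zeta\|\le\|\zeta\|$ for $\zeta\in\mathscr{H}'$, which will be the workhorse of the next step.

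Next I would build $\mathscr{K}'$ and $\Psi$. Because a Finsler module carries no $\mathscr{A}$-valued inner product, the cross terms cannot be manufactured from an expression $\langle x,y\rangle$; instead I extract them from the given map $\Phi$. On the balanced algebraic tensor product $\mathscr{E}\odot_{\mathscr{A}}\mathscr{H}'$ (with $\mathscr{H}'$ a left $\mathscr{A}$-module via $\theta$) I define a sesquilinear form on the generators $x\otimes Vh$ by $\langle x\otimes Vh, y\otimes Vk\rangle=\langle\Phi(x)h,\Phi(y)k\rangle_{\mathscr{K}}$ and extend it through the balancing relation $x\otimes\theta(a)Vh:=(xa)\otimes Vh$. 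Positivity is then automatic, since any finite combination of generators satisfies $\langle\sum_{m}y_{m}\otimes Vh_{m},\sum_{m}y_{m}\otimes Vh_{m}\rangle=\|\sum_{m}\Phi(y_{m})h_{m}\|_{\mathscr{K}}^{2}\ge 0$; and the extension is well defined because the contraction estimate above gives $\|\sum_{i}(xa_{i})\otimes Vh_{i}\|\le\|\Phi(x)\|\,\|\sum_{i}\theta(a_{i})Vh_{i}\|$, so a null vector in $\mathscr{H}'$ produces a null vector in the tensor space. Completing yields $\mathscr{K}'$, and setting $\Psi(x)\zeta=x\otimes\zeta$ produces a bounded operator $\Psi(x)\in\mathbb{B}(\mathscr{H}',\mathscr{K}')$ with $\|\Psi(x)\|\le\|\Phi(x)\|$ and, by construction, $\Psi(xa)=\Psi(x)\theta(a)$.

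Finally I would produce $W$ and verify the intertwining. The assignment $\Phi(x)h\mapsto x\otimes Vh$ is isometric on $\mathscr{K}_{\Phi}:=\overline{\mathrm{span}}\{\Phi(x)h\}\subseteq\mathscr{K}$, so enlarging $\mathscr{K}'$ by an orthogonal copy of $\mathscr{K}\ominus\mathscr{K}_{\Phi}$ extends it to a global isometry $W:\mathscr{K}\to\mathscr{K}'$; then $W^{*}W=I$ and $W^{*}\Psi(x)Vh=W^{*}(x\otimes Vh)=\Phi(x)h$, giving $\Phi(x)=W^{*}\Psi(x)V$. The main obstacle, and the step I would spend the most care on, is checking that $\Psi$ is genuinely a $\theta$-morphism, i.e. that $\rho_{0}(\Psi(x))=\theta(\rho(x))$, equivalently $\Psi(x)^{*}\Psi(x)=\theta(\rho(x)^{2})$. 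Expanding the left-hand side on generators reduces this to an identity of the form $\varphi(b)^{*}\varphi(\rho(x))^{2}\varphi(a)=\varphi(b^{*}\rho(x)^{2}a)$, which couples the Finsler datum $\rho$ (available only on the diagonal) to the non-multiplicativity of the completely positive map $\varphi$. This is exactly where the complete positivity hypothesis, the compatibility relations between $\Phi$ and $\varphi$, and the minimality of the Stinespring space must be used in concert; establishing this identity, or isolating the precise compatibility condition on $(\Phi,\varphi)$ under which it holds, is the technical heart of the proof.
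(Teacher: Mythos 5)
Your construction is, up to a canonical identification, the same as the paper's: the Stinespring dilation $(\theta,V,\mathscr{H}')$ of $\varphi$ is identical, and your completed balanced tensor product $\mathscr{E}\odot_{\mathscr{A}}\mathscr{H}'$ with $\langle x\otimes Vh,\,y\otimes Vk\rangle=\langle\Phi(x)h,\Phi(y)k\rangle$ is unitarily the paper's $\mathscr{K}'=\overline{{\rm span}}\{\Phi(x)h\}\subseteq\mathscr{K}$ via $x\otimes Vh\mapsto\Phi(x)h$, with your $\Psi(x)\zeta=x\otimes\zeta$ corresponding to the paper's $\Psi(x)\bigl(\sum_i\theta(a_i)Vh_i\bigr)=\sum_i\Phi(xa_i)h_i$. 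Two of your refinements are actually improvements: your boundedness estimate $\bigl\|\sum_i\Phi(xa_i)h_i\bigr\|=\bigl\|\Phi(x)V^*\sum_i\theta(a_i)Vh_i\bigr\|\le\|\Phi(x)\|\,\bigl\|\sum_i\theta(a_i)Vh_i\bigr\|$ uses only $\Phi(xa)=\Phi(x)\varphi(a)$, whereas the paper's estimate already invokes the problematic identity discussed below; and your $W$ is an honest isometry, whereas the paper takes $W$ to be the orthogonal projection of $\mathscr{K}$ onto $\mathscr{K}'$, which is a coisometry rather than the isometry promised in the statement.

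The gap is that you never establish that $\Psi$ is a $\theta$-morphism, i.e.\ $\Psi(x)^*\Psi(x)=\theta(\rho(x)^2)$, which is part of the conclusion; you only reduce it to the identity $\varphi(b^*)\varphi(\rho(x))^{2}\varphi(a)=\varphi(b^*\rho(x)^2a)$ and flag it as the technical heart. The paper closes this step by the chain
\begin{equation*}
\langle\Psi(x)^*\Psi(x)\,\theta(a)Vh,\theta(b)Vg\rangle
=\langle\varphi(b^*)\varphi(\rho(x))^{2}\varphi(a)h,g\rangle
=\langle\varphi(b^*\rho(x)^2a)h,g\rangle
=\langle\theta(\rho(x)^2)\theta(a)Vh,\theta(b)Vg\rangle ,
\end{equation*}
but the middle equality is exactly your unproved identity (compounded by the paper's silent replacement of $\varphi(\rho(x))^2$ by $\varphi(\rho(x)^2)$), and it is valid for $*$-homomorphisms, not for general completely positive maps: already for a state $\varphi$ with $a=b=1$ it asserts $\varphi(\rho(x))^2=\varphi(\rho(x)^2)$, i.e.\ zero variance. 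So your diagnosis of where the difficulty sits is accurate --- the paper elides the same step --- but as submitted your argument does not prove the theorem. To finish it you must either derive this identity from the hypothesis that $\Phi$ is a $\varphi$-morphism (note that $\Phi(xa)=\Phi(x)\varphi(a)$ together with $\rho_0(\Phi(xa))=\varphi(\rho(xa))$ does force $\varphi\bigl((a^*\rho(x)^2a)^{1/2}\bigr)^{2}=\varphi(a^*)\varphi(\rho(x))^{2}\varphi(a)$, which is the diagonal case but not the full bilinear identity) or state it as an explicit additional compatibility hypothesis on the pair $(\Phi,\varphi)$.
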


\begin{proof} By \cite[Theorem 4.1]{paul} there exist a Hilbert space $\mathscr{H}'=\mathscr{A}\otimes \mathscr{H}$, a representation $\theta: \mathscr{A} \to \mathbb{B}(\mathscr{H'})$ and an isometry $V: \mathscr{H} \to \mathscr{H}'$ defined by $V(h)=1\otimes h$ such that $\varphi(a)=V^* \theta(a) V$. We may consider a minimal Stinespring representation for $\theta$, where $\mathscr{H}'$ is the closed linear span of $\{\theta(a)Vh: a\in \mathscr{A}, h\in \mathscr{H}\}$.\\
Now, we put $\mathscr{K}'$ to be the closed linear span of
$\{\Phi(x)h: x\in \mathscr{E}, h\in \mathscr{H}\}$ and define the
mapping $\Psi: \mathscr{E} \to
\mathbb{B}(\mathscr{H}',\mathscr{K}'), x\mapsto \Psi(x)$,
where $\Psi(x): {\rm span} \{\theta(a)Vh, a\in \mathscr{A}, h\in \mathscr{H}\} \to \mathscr{K}'$ is defined by $\Psi(x) (\sum_{i=1}^{n} \theta(a_i)Vh_i) = \sum_{i=1}^{n} \Phi(xa_i)h_i$ for $x \in \mathscr{E}, a_i\in \mathscr{A}, h_i \in \mathscr{H}.$\\
The map $\Psi(x)$ is well-defined and bounded, since
\begin{eqnarray*}
\left\| \Psi(x) (\sum_{i=1}^{n} \theta(a_i) V h_i) \right\|^2 & =&
\left\| \sum_{i=1}^{n} \Phi(x a_i) h_i\right\|^2\\
&=& \sum_{i,j=1}^{n}\langle \Phi(x a_j)^* \Phi(x a_i)h_i,
h_j\rangle\\
&=& \sum_{i,j=1}^{n}\langle \varphi(a_j^*)\Phi(x)^*\Phi(x) \varphi(a_i) h_i, h_j \rangle\\
&=& \sum_{i,j=1}^{n}\langle \varphi(a_j^*) \varphi(\rho(x)^2)
\varphi(a_i )h_i, h_j \rangle\\
&=& \sum_{i,j=1}^{n}\langle \varphi(a_j^*\rho(x)^2 a_i )h_i, h_j \rangle\\
&=& \sum_{i,j=1}^{n}\langle V^*\theta(a_j^*\rho(x)^2 a_i ) V h_i,
h_j \rangle\\
&=&\sum_{i,j=1}^{n}\langle \theta(\rho(x)^2 ) \theta(a_i) V h_i, \theta(a_j) V h_j \rangle\\
&\leq & \left\| \theta(\rho(x)^2) \right\| \left\| \sum_{i=1}^{n}
\theta(a_i)V h_i \right\|^2\\
&\leq& \left\| \rho(x) \right\|^2 \left\| \sum_{i=1}^{n}\theta(a_i) V h_i\right\|^2 \\
&=& \left\| x \right\|^2 \left\| \sum_{i=1}^{n} \theta(a_i) V
h_i\right\|^2.
\end{eqnarray*}
The mapping $\Psi$ is a $\theta$-morphism, since for all $a,b\in
\mathscr{A}$ and $h,g\in \mathscr{H}$
\begin{eqnarray*}
\langle\Psi(x)^*\Psi(x) (\theta (a) V h), \theta(b) V g\rangle &=&
\langle\Psi(x) (\theta (a) V h), \Psi(x)(\theta(b) V g)\rangle\\
&=& \langle \Phi(xa) h, \Phi(xb)g\rangle\\
&=& \langle \Phi(x)
\varphi(a) h, \Phi(x)\varphi(b) g\rangle \\
&=& \langle
\Phi(x)^*\Phi(x) \varphi(a) h, \varphi(b) g\rangle\\
&=& \langle
\varphi(\rho(x)^2) \varphi(a) h, \varphi(b) g\rangle\\
&=& \langle
\varphi(b^*\rho(x)^2 a) h, g\rangle\\
&=& \langle V^*\theta(b^*\rho(x)^2 a) Vh, g\rangle
\\&=& \langle
\theta(\rho(x)^2 )\theta(a) Vh, \theta(b) V g\rangle,
\end{eqnarray*}
whence $\Psi(x)^*\Psi(x)=\theta(\rho(x)^2 )$. Moreover
\begin{eqnarray*}
\Psi(x)\theta(a)(\theta(b)Vh)&=&\Psi(x)(\theta(ab)Vh)\\&=&\Phi(x(ab))h\\&=&\Phi((xa)b)h\\&=&\Psi(xa)(\theta(b)Vh),
\end{eqnarray*}
so that $\Psi(x)\theta(a)=\Psi(xa)$.\\
Since $\mathscr{K}'\subseteq \mathscr{K}$ we can consider a map $W$ as the orthogonal projection of $\mathscr{K}$ onto $\mathscr{K}'$. Hence $W^*: \mathscr{K}'\to \mathscr{K}$ is the inclusion map, whence for any $k'\in \mathscr{K}'$ we have $WW^*(k')=W(k')=k'$, that is $WW^*=I_{\mathscr{K}'}$.\\
Finally we observe that $W^*\Psi(x)Vh=\Psi(x) V
h=\Psi(x)(\theta(1)Vh)=\Phi(x)h$, that is $W^*\Psi(x)V=\Phi(x)$.
\end{proof}

\section{NONDEGENERATE AND IRREDUCIBLE QUASI-REPRESENTATIONS}

In this section we define the notions of nondegenerate and
irreducible quasi-representations of Finsler modules and describe
relations between the nondegeneracy and the irreduciblity, see
\cite{lji}. Throughout this section we assume that the
quasi-representations satisfy condition (ii) of Definition 2.1.

\begin{definition} Let $\Phi: \mathscr{E}\to \mathbb{B}(\mathscr{H},\mathscr{K})$ be a quasi-representation of a Finsler module $\mathscr{E}$ over a $C^*$-algebra $\mathscr{A}$. The map $\Phi$ is said
to be nondegenerate if
$\overline{\Phi(\mathscr{E})\mathscr{H}}=\mathscr{K}$ and
$\overline{\Phi(\mathscr{E})^{*}\mathscr{K}}=\mathscr{H}$ (or
equivalently, if there exist $\xi\in \mathscr{H},\eta\in
\mathscr{K}$ such that $\Phi(\mathscr{E})\xi=0$ and
$\Phi(\mathscr{E})^{*}\eta=0$, then $\xi=\eta=0)$. Recall that a
representation $\varphi: \mathscr{A}\to \mathbb{B}(\mathscr{H})$ of
a $C^*$-algebra $\mathscr{A}$ is nondegenerate if
$\overline{\varphi(\mathscr{A})\mathscr{H}}=\mathscr{H}$ (or
equivalently, if there exists $\xi\in \mathscr{H}$ such that
$\varphi(\mathscr{A})\xi=0$, then $\xi=0)$, see \cite[definition
A.1.]{ske}.
\end{definition}

\begin{theorem} If $\Phi: \mathscr{E}\to \mathbb{B}(\mathscr{H},\mathscr{K})$ is a nondegenerate quasi-representation, then $\varphi: \mathscr{A}\to \mathbb{B}(\mathscr{H})$ is a nondegenerate representation. If $\mathscr{E}$ is full and $\varphi$ is nondegenerate, then $\Phi$ is also nondegenerate.\label{5}
\end{theorem}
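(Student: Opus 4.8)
The plan is to reduce both assertions to the orthogonal-complement form of nondegeneracy and to use the single identity that every quasi-representation satisfies: from $\rho_0(\Phi(x))=\varphi(\rho(x))$ and \eqref{10} one has $(\Phi(x)^*\Phi(x))^{\frac{1}{2}}=\varphi(\rho(x))$, and squaring (recall that here $\varphi$ is a representation, hence a $*$-homomorphism) gives
\begin{equation*}
\Phi(x)^*\Phi(x)=\varphi(\rho(x))^2=\varphi(\rho(x)^2)\qquad(x\in\mathscr{E}).
\end{equation*}
Taking orthogonal complements and adjoints, the two defining conditions $\overline{\Phi(\mathscr{E})\mathscr{H}}=\mathscr{K}$ and $\overline{\Phi(\mathscr{E})^*\mathscr{K}}=\mathscr{H}$ are equivalent, respectively, to the implications ``$\Phi(x)^*\eta=0$ for all $x$ forces $\eta=0$'' and ``$\Phi(x)\xi=0$ for all $x$ forces $\xi=0$''; likewise nondegeneracy of $\varphi$ is ``$\varphi(a)\xi=0$ for all $a$ forces $\xi=0$''. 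This translation is what makes the algebraic identity usable.

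For the first assertion I would start from $\xi\in\mathscr{H}$ with $\varphi(\mathscr{A})\xi=0$. Specialising to $a=\rho(x)^2$ and using the displayed identity gives $\Phi(x)^*\Phi(x)\xi=\varphi(\rho(x)^2)\xi=0$, whence $\|\Phi(x)\xi\|^2=\langle\Phi(x)^*\Phi(x)\xi,\xi\rangle=0$, so $\Phi(x)\xi=0$ for every $x$. The nondegeneracy of $\Phi$, in the guise of its second density condition $\overline{\Phi(\mathscr{E})^*\mathscr{K}}=\mathscr{H}$, then forces $\xi=0$, so $\varphi$ is nondegenerate.

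For the converse I would establish the two density conditions separately. The condition $\overline{\Phi(\mathscr{E})^*\mathscr{K}}=\mathscr{H}$ is the routine one: if $\Phi(x)\xi=0$ for all $x$, then $\varphi(\rho(x)^2)\xi=\Phi(x)^*\Phi(x)\xi=0$ for all $x$, and since $\mathscr{E}$ is full the elements $\rho(x)^2$ span a dense subspace of $\mathscr{A}$; linearity and continuity of $\varphi$ then give $\varphi(a)\xi=0$ for all $a\in\mathscr{A}$, and nondegeneracy of $\varphi$ yields $\xi=0$.

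The remaining condition $\overline{\Phi(\mathscr{E})\mathscr{H}}=\mathscr{K}$, i.e. ``$\Phi(x)^*\eta=0$ for all $x$ forces $\eta=0$'', is where I expect the genuine difficulty to lie, and it is the step I would scrutinise most carefully. The identity above controls only the $\mathscr{H}$-side: it expresses $\Phi(x)^*\Phi(x)$ through $\varphi$, but in the Finsler setting there is no inner product pairing distinct elements of $\mathscr{E}$, and hence no way to express the $\mathscr{K}$-side operators $\Phi(x)\Phi(y)^*$ in terms of $\varphi$. Consequently fullness and nondegeneracy of $\varphi$ constrain $\eta$ only through $\mathscr{H}$ and give no leverage against a vector $\eta\in\mathscr{K}$ lying in the orthogonal complement of the generated subspace $\overline{\Phi(\mathscr{E})\mathscr{H}}$. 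The natural way to close the argument is therefore to take $\mathscr{K}$ to be exactly $\overline{\Phi(\mathscr{E})\mathscr{H}}$, as in the Stinespring construction of the previous section, in which case this condition holds tautologically; absent such a normalisation on $\mathscr{K}$ one should expect this implication to need an extra hypothesis. I would make this the focal point of the write-up and state precisely the assumption on $\mathscr{K}$ under which the converse is valid.
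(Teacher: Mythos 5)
Your proposal is sound, and in two respects it diverges from (and improves on) what the paper actually does. For the first implication the paper argues via the Hewitt--Cohen factorization theorem: it writes $\Phi(\mathscr{E})\xi=\Phi(\mathscr{E}\mathscr{A})\xi=\Phi(\mathscr{E})\varphi(\mathscr{A})\xi=0$, which requires the module property $\Phi(xa)=\Phi(x)\varphi(a)$ and the factorization $\mathscr{E}=\mathscr{E}\mathscr{A}$. Your route through $\Phi(x)^*\Phi(x)\xi=\varphi(\rho(x)^2)\xi=0$ uses only the defining identity of a quasi-representation and is both more elementary and more robust; either argument is valid. For the converse, the paper's proof is exactly your ``routine'' half: from $\Phi(\mathscr{E})\xi=0$ it gets $\varphi(\rho(x))\xi=0$ for all $x$, invokes fullness to conclude $\varphi(a)\xi=0$ for all $a$, and then uses nondegeneracy of $\varphi$ (there is a typo in the paper, $\|\varphi(\rho(x))\|^2$ for $\|\varphi(\rho(x))\xi\|^2$, but the intent is clear).

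The point you flag as the genuine difficulty is a real gap, and it is present in the paper: the published proof of the converse establishes only $\overline{\Phi(\mathscr{E})^*\mathscr{K}}=\mathscr{H}$ and says nothing about $\overline{\Phi(\mathscr{E})\mathscr{H}}=\mathscr{K}$, even though the paper's own Definition 4.1 requires both. Your diagnosis of why it cannot follow is correct: the quasi-representation axiom constrains only $\Phi(x)^*\Phi(x)$, so replacing $\mathscr{K}$ by $\mathscr{K}\oplus\mathscr{K}_1$ and composing $\Phi$ with the inclusion leaves all hypotheses intact while destroying the first density condition. The statement is salvaged precisely by the normalisation you propose, $\mathscr{K}=\overline{\Phi(\mathscr{E})\mathscr{H}}$ (which is how $\mathscr{K}'$ is chosen in the Stinespring section, and is implicit in the later Theorem \ref{6}, which restricts to the $\Phi$ constructed in Theorem \ref{main}). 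So your write-up should indeed state that extra hypothesis explicitly; on this point your proposal is more careful than the source.
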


\begin{proof} Suppose that $\Phi$ is nondegenerate and $\varphi(\mathscr A)\xi=0$. It follows from the Hewitt--Cohen factorization theorem that
$\Phi(\mathscr{E})\xi=\Phi(\mathscr{E}\mathscr
A)\xi=\Phi(\mathscr{E})\varphi(\mathscr A)\xi=0$. We conclude that
$\xi=0$. Thus $\varphi$ is nondegenerate.

Suppose that $\Phi(\mathscr{E})\xi=0$ for some $\xi\in \mathscr{H}$.
Then for any $x\in \mathscr{E}$ we have $\| \Phi(x)\xi\|^{2}=\langle
\Phi(x)^* \Phi(x)\xi, \xi\rangle=\langle\varphi(\rho(x)^2)\xi,
\xi\rangle=\| \varphi(\rho(x)) \|^2=0$. Since $\mathscr{E}$ is a
full Finsler $\mathscr A$-module, $a={\displaystyle{\lim_{n\to
\infty}}} \sum_{i=1}^{k_n} \lambda_{i,n} \rho(x_{i,n})^2$ for some
$k_n \in \mathbb{N}$, $x_{i,n}\in \mathscr{E} $ and
$\lambda_{i,n}\in \Bbb{C}$. Hence
\begin{eqnarray*}
\varphi(a)\xi={\displaystyle{\lim_{n\to \infty}}} \sum_{i=1}^{k_n}
\lambda_{i,n}\varphi(\rho(x_{i,n}))^2\xi={\displaystyle{\lim_{n\to
\infty}}} \sum_{i=1}^{k_n}
\lambda_{i,n}\varphi(\rho(x_{i,n}))\varphi(\rho(x_{i,n}))\xi=0,
\end{eqnarray*}
whence $\xi=0$.
\end{proof}

\begin{remark} The second result of Theorem \ref{5} may fail, if the condition of being full is dropped.
To see this take $\mathscr A$ to be a nondegenerate von Neumann
algebra acting on a Hilbert space,
which has a nontrivial central projection $P$. Hence the identity map $\varphi: \mathscr A \to \mathbb{B}(\mathscr{H})$ is assumed to be nondegenerate.\\
Put $\mathscr{E}=\mathscr A P=\{aP: a\in \mathscr A\}$ as a Finsler
$\mathscr A$-module equipped with $\rho(aP)=\vert aP\vert $. Clearly
$\mathscr A P$ is not full. The identity map $\Phi: \mathscr A P \to
\mathbb B(\mathscr{H})$ satisfies the following:

(i) $\rho_0\Phi(aP)=\rho_0(aP)=\vert aP\vert=\varphi(\vert
aP\vert)=\varphi\rho(aP)$, where $\rho_0$ is defined as in
(\ref{10}).

(ii) $\Phi(aPb)=\Phi(aP)\varphi(b)$ for all $b\in \mathscr A$. \\
Hence $\Phi$ is a quasi-representation of $\mathscr E$, which is not
clearly nondegenerate, since
\begin{eqnarray*}
\overline{\Phi(\mathscr E)\mathscr H}= \overline{\mathscr A
P(\mathscr{H})}=\overline{P(\mathscr A \mathscr{H})}\subseteq
\overline{P(\mathscr{H})}=P(\mathscr{H}) \neq
\mathscr{H}.\end{eqnarray*}\label{7}
\end{remark}

In the following corollary we investigate a condition under which
the representation $\varphi$ and the quasi-representation $\Phi$ are
nondegenerate.
\begin{corollary} If $\varphi(\rho(x))=I_\mathscr{H}$, then both $\Phi$ and $\varphi$ are nondegenerate.
\end{corollary}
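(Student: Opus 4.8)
The plan is to exploit the single element $x$ with $\varphi(\rho(x)) = I_\mathscr{H}$ to produce an isometry and then read off nondegeneracy from it. First I would record the basic identity. Since $\Phi$ is a quasi-representation, $\rho_0(\Phi(x)) = \varphi(\rho(x))$, and by (\ref{10}) this reads $(\Phi(x)^*\Phi(x))^{1/2} = \varphi(\rho(x))$. Squaring and using that $\varphi$ is a $*$-homomorphism gives $\Phi(x)^*\Phi(x) = \varphi(\rho(x))^2 = \varphi(\rho(x)^2)$, and substituting the hypothesis $\varphi(\rho(x)) = I_\mathscr{H}$ yields $\Phi(x)^*\Phi(x) = I_\mathscr{H}$. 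Hence $\Phi(x)$ is an isometry from $\mathscr{H}$ into $\mathscr{K}$, and this single fact is what drives the rest.

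Nondegeneracy of $\varphi$ is then immediate: because $\rho(x) \in \mathscr{A}$, the identity $I_\mathscr{H} = \varphi(\rho(x))$ lies in $\varphi(\mathscr{A})$, so $\mathscr{H} = \varphi(\rho(x))\mathscr{H} \subseteq \varphi(\mathscr{A})\mathscr{H}$ and therefore $\overline{\varphi(\mathscr{A})\mathscr{H}} = \mathscr{H}$; equivalently, $\varphi(\mathscr{A})\xi = 0$ forces $\varphi(\rho(x))\xi = \xi = 0$. For $\Phi$ I would check the two requirements of Definition 4.1 separately. The condition $\overline{\Phi(\mathscr{E})^*\mathscr{K}} = \mathscr{H}$ is equivalent to the implication $\Phi(\mathscr{E})\xi = 0 \Rightarrow \xi = 0$, and this drops out of the isometry property: if $\Phi(\mathscr{E})\xi = 0$ then in particular $\Phi(x)\xi = 0$, whence $\|\xi\|^2 = \langle \Phi(x)^*\Phi(x)\xi, \xi\rangle = \|\Phi(x)\xi\|^2 = 0$, so $\xi = 0$.

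The remaining requirement $\overline{\Phi(\mathscr{E})\mathscr{H}} = \mathscr{K}$, equivalently $\Phi(\mathscr{E})^*\eta = 0 \Rightarrow \eta = 0$, is where I expect the genuine difficulty. Combining condition (ii) with nondegeneracy of $\varphi$ one gets $\Phi(x)\mathscr{H} = \overline{\Phi(x)\varphi(\mathscr{A})\mathscr{H}} = \overline{\Phi(x\mathscr{A})\mathscr{H}} \subseteq \overline{\Phi(\mathscr{E})\mathscr{H}}$, so the target already contains the whole range of the isometry $\Phi(x)$. However, $\Phi(x)^*\eta = 0$ only places $\eta$ in $(\Phi(x)\mathscr{H})^{\perp}$, and this does not force $\eta = 0$ when $\mathscr{K}$ is strictly larger than $\Phi(x)\mathscr{H}$ (for instance after enlarging the target space). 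The main obstacle is therefore precisely the density of $\bigcup_{y\in\mathscr{E}}\Phi(y)\mathscr{H}$ in $\mathscr{K}$, which the hypothesis $\varphi(\rho(x)) = I_\mathscr{H}$ alone does not secure. To close the argument I would supply the missing ingredient explicitly, either by taking $\mathscr{K}$ to be the minimal target $\overline{\Phi(\mathscr{E})\mathscr{H}}$ or by upgrading the isometry $\Phi(x)$ to a unitary; in the latter case $\mathscr{K} = \Phi(x)\mathscr{H} \subseteq \overline{\Phi(\mathscr{E})\mathscr{H}}$ gives the claim at once.
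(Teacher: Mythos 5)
Your argument for the half you do establish coincides with the paper's: the paper's proof also takes $\xi\in\mathscr{H}$ with $\Phi(\mathscr{E})\xi=0$ and computes $\|\Phi(x)\xi\|^{2}=\langle\Phi(x)^{*}\Phi(x)\xi,\xi\rangle=\langle\varphi(\rho(x)^{2})\xi,\xi\rangle=\|\xi\|^{2}=0$, which is exactly your isometry observation $\Phi(x)^{*}\Phi(x)=I_{\mathscr{H}}$ in expanded form. For the nondegeneracy of $\varphi$ you argue directly from $I_{\mathscr{H}}=\varphi(\rho(x))\in\varphi(\mathscr{A})$, whereas the paper routes this through the first part of Theorem \ref{5} (via the Hewitt--Cohen factorization); your version is more elementary and, strictly speaking, safer, since the paper's appeal to Theorem \ref{5} presupposes that $\Phi$ is nondegenerate in the full sense of Definition 4.1, which is precisely what is at issue.

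The obstruction you flag in your last paragraph is genuine, and it is worth noting that the paper's own proof is silent on it: Definition 4.1 requires \emph{both} $\overline{\Phi(\mathscr{E})^{*}\mathscr{K}}=\mathscr{H}$ and $\overline{\Phi(\mathscr{E})\mathscr{H}}=\mathscr{K}$, but the printed proof only verifies the first (the implication $\Phi(\mathscr{E})\xi=0\Rightarrow\xi=0$) and never addresses the second. As you observe, the hypothesis $\varphi(\rho(x))=I_{\mathscr{H}}$ only guarantees that $\overline{\Phi(\mathscr{E})\mathscr{H}}$ contains the range of the isometry $\Phi(x)$, and enlarging the codomain $\mathscr{K}$ destroys density, so the statement as literally written fails for a general quasi-representation. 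Your proposed repairs are the right ones: either take $\mathscr{K}=\overline{\Phi(\mathscr{E})\mathscr{H}}$, or restrict to the concrete $\Phi$ of Theorem \ref{main} with the hypothesis holding for every $x\in\mathscr{E}$, in which case $\Phi(y)h=\varphi(\rho(y))h_{y}=h_{y}$ and these elements span $\mathscr{K}_{0}$, so the second density condition does hold. In short: your proof is correct where the paper's is, and the gap you identify is a real one that the paper inherits as well.
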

\begin{proof} Suppose $\Phi(\mathscr{E})\xi=0$ for some $\xi\in \mathscr{H}$. Then for all $x\in \mathscr{E}$
we have $ \| \Phi(x)\xi\|^2=\langle \Phi(x)^*
\Phi(x)\xi~,~\xi\rangle=\langle \varphi(\rho(x)^2)\xi,
\xi\rangle=\|\xi\|^2=0$, so that $\xi=0$. The nondegeneracy of
$\varphi$ follows from Theorem \ref{5}.
\end{proof}

\begin{definition} Let $\Phi: \mathscr{E}\to B(\mathscr{H},\mathscr{H}')$ be a
quasi-representation of a Finsler module $\mathscr{E}$ over a
$C^*$-algebra $\mathscr{A}$ and let $\mathscr{K},\mathscr{K}'$ be
closed subspaces of $\mathscr{H}$ and $\mathscr{H}'$, respectively.
A pair of subspaces $(\mathscr{K},\mathscr{K}')$ is said to be
$\Phi$-invariant if $\Phi(\mathscr{E})\mathscr{K}\subseteq
\mathscr{K}'$ and $\Phi(\mathscr{E})^{*}\mathscr{K}'\subseteq
\mathscr{K}$. The quasi-representation $\Phi$ is said to be
irreducible if $(0,0)$ and $(\mathscr{H},\mathscr{H}')$ are the only
$\Phi$-invariant pairs. Recall that a representation $\varphi:
\mathscr{A}\to \mathbb{B}(\mathscr{H})$ of a $C^*$-algebra
$\mathscr{A}$ is irreducible if $0$ and $\mathscr{H}$ are only
closed subspaces of $\mathscr{H}$ being $\varphi$-invariant, i.e.
are invariant for $\varphi(\mathscr{A})$.
\end{definition}

\begin{theorem}
Suppose that the quasi-representation $\Phi: \mathscr{E}\to
\mathbb{B}(\mathscr{H},\mathscr{K})$ constructed in Theorem
\ref{main} is irreducible. Then so is $\varphi: \mathscr{A}\to
\mathbb{B}(\mathscr{H})$. If $\mathscr{E}$ is full and $\varphi$ is
irreducible, then $\Phi$ is irreducible.\label{6}
\end{theorem}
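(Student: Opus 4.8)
The plan is to prove both implications by contraposition, mirroring the structure of the proof of Theorem \ref{5} and exploiting the explicit form of the quasi-representation built in Theorem \ref{main}. The one computational fact I would extract from that construction is that the operators $\Phi(x)$ have mutually orthogonal ranges: since $\Phi(x)h=\varphi(\rho(x))h_x$ is supported at the single point $x$, for $y\neq x$ the functions $h_x$ and $h'_y$ have disjoint supports, so $\langle\Phi(x)h,\Phi(y)h'\rangle=0$ and hence $\Phi(y)^*\Phi(x)=0$, while $\Phi(x)^*\Phi(x)=\varphi(\rho(x)^2)$ is the identity established in the proof of Theorem \ref{main}.

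For the first implication I would argue contrapositively: assume $\varphi$ is reducible and produce a nontrivial $\Phi$-invariant pair. Let $\mathscr{M}$ be a closed $\varphi$-invariant subspace with $0\neq\mathscr{M}\neq\mathscr{H}$, and set $\mathscr{K}_1=\mathscr{M}$ and $\mathscr{K}_2=\overline{\Phi(\mathscr{E})\mathscr{M}}$. The inclusion $\Phi(\mathscr{E})\mathscr{K}_1\subseteq\mathscr{K}_2$ is immediate. For the adjoint inclusion I use the orthogonality above: for $\xi\in\mathscr{M}$ one has $\Phi(x)^*\Phi(y)\xi=0$ when $x\neq y$ and $\Phi(x)^*\Phi(x)\xi=\varphi(\rho(x)^2)\xi\in\mathscr{M}$ by $\varphi$-invariance, so $\Phi(\mathscr{E})^*\mathscr{K}_2\subseteq\overline{\mathscr{M}}=\mathscr{M}=\mathscr{K}_1$. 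Thus $(\mathscr{M},\mathscr{K}_2)$ is $\Phi$-invariant, and it is neither $(0,0)$ nor $(\mathscr{H},\mathscr{K})$ because $\mathscr{M}\notin\{0,\mathscr{H}\}$; this contradicts the irreducibility of $\Phi$.

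For the second implication I would again argue contrapositively, starting from a nontrivial $\Phi$-invariant pair $(\mathscr{K}_1,\mathscr{K}_2)$. Combining the two invariance conditions gives $\Phi(x)^*\Phi(x)\mathscr{K}_1\subseteq\mathscr{K}_1$, i.e. $\varphi(\rho(x)^2)\mathscr{K}_1\subseteq\mathscr{K}_1$ for every $x$; since $\mathscr{E}$ is full, the span of $\{\rho(x)^2\}$ is dense in $\mathscr{A}$, so by linearity and closedness $\varphi(\mathscr{A})\mathscr{K}_1\subseteq\mathscr{K}_1$ and $\mathscr{K}_1$ is $\varphi$-invariant. Irreducibility of $\varphi$ then forces $\mathscr{K}_1\in\{0,\mathscr{H}\}$. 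To finish I would invoke that a nonzero irreducible $\varphi$ is nondegenerate, so by Theorem \ref{5} the quasi-representation $\Phi$ is nondegenerate, i.e. $\overline{\Phi(\mathscr{E})\mathscr{H}}=\mathscr{K}$. If $\mathscr{K}_1=\mathscr{H}$ then $\mathscr{K}_2\supseteq\overline{\Phi(\mathscr{E})\mathscr{H}}=\mathscr{K}$, so the pair is $(\mathscr{H},\mathscr{K})$; if $\mathscr{K}_1=0$ then $\Phi(\mathscr{E})^*\mathscr{K}_2=0$, and nondegeneracy forces $\mathscr{K}_2=0$, so the pair is $(0,0)$. Either way the pair is trivial, contradicting the assumption.

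The routine parts (the orthogonality identity and the $\varphi$-invariance of $\mathscr{K}_1$) transcribe the corresponding steps of Theorems \ref{main} and \ref{5}. The delicate point, which I expect to be the real obstacle, is pinning down the \emph{second} component $\mathscr{K}_2$ in the second implication: both cases hinge on the equality $\overline{\Phi(\mathscr{E})\mathscr{H}}=\mathscr{K}$, that is, on the nondegeneracy of $\Phi$. I would therefore be careful to justify that irreducibility of $\varphi$ yields its nondegeneracy and that Theorem \ref{5} is genuinely applicable to the constructed $\Phi$, since without the identification $\overline{\Phi(\mathscr{E})\mathscr{H}}=\mathscr{K}$ the pair $(\mathscr{H},\mathscr{K}_2)$ need not collapse to $(\mathscr{H},\mathscr{K})$.
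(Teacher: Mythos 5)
Your proposal is correct and follows essentially the same route as the paper: both rely on the orthogonality $\Phi(x)^*\Phi(y)=0$ for $x\neq y$ from the construction of Theorem \ref{main} together with $\Phi(x)^*\Phi(x)=\varphi(\rho(x)^2)$ to pass invariant subspaces back and forth, use fullness to show the first component of a $\Phi$-invariant pair is $\varphi$-invariant, and invoke nondegeneracy (via Theorem \ref{5}) to pin down the second component. The only difference is that you phrase both implications contrapositively, which is purely cosmetic.
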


\begin{proof} Suppose that $\Phi$ is irreducible and a closed subspace $\mathscr{K}$ of $\mathscr{H}$ is $\varphi$-invariant.
Consider $\mathscr{K}'=\overline{\Phi(\mathscr{E})\mathscr{K}}$.
Clearly $\Phi(\mathscr{E})\mathscr{K}\subseteq \mathscr{K}'$. Due to
$\overline{\varphi(\mathscr{A}) \mathscr{K}} \subseteq \mathscr{K}$
we observe that $\overline{\varphi(\rho(x)^2) \mathscr{K}}\subseteq
\mathscr{K}$, whence $\overline{\Phi(x)^*\Phi(x) \mathscr{K}}
\subseteq \mathscr{K}$ for all $x\in \mathscr{E}$. Now let $x \neq
y$. In the notation of Theorem \ref{main} we have
\begin{eqnarray*}
\langle\Phi(x)^*\Phi(y)h, h'\rangle &=& \langle\Phi(y)h~,\Phi(x)h'\rangle=\langle\varphi(\rho(y))h_y, \varphi(\rho(x))h'_x\rangle\\
&=&\sum_{z\in \mathscr{E}}\langle\varphi(\rho(y)) h_y(z),
\varphi(\rho(x))h'_x(z)\rangle=0,
\end{eqnarray*}
for all $h,h'\in \mathscr{H}$. Put $h'=\Phi(x)^*\Phi(y)h$ to get
$\langle\Phi(x)^*\Phi(y)h, \Phi(x)^*\Phi(y)h\rangle=0$. So that
$\Phi(x)^*\Phi(y)h=0$. Therefore $\Phi(x)^*\Phi(y) \mathscr{K}=0
\mathscr{K}\subseteq \mathscr{K}$. It follows that
$\Phi(E)^*\overline{\Phi(E) \mathscr{K}} \subseteq
\overline{\Phi(E)^*\Phi(E) \mathscr{K}} \subseteq \mathscr{K}$.
 Since $\Phi$ is irreducible, we conclude that $(\mathscr{K},\mathscr{K}')=(0,0)$ or $(\mathscr{K},\mathscr{K}')=(\mathscr{H},\mathscr{H}')$, hence $\mathscr{K}=0$ or $\mathscr{K}=\mathscr{H}$. This implies that $\varphi$ is irreducible.

Now assume that $\varphi$ is irreducible. It follows from
\cite[Remark 4.1.4]{mo} that $\varphi$ is nondegenerate.
By Theorem \ref{5}, $\Phi$ is nondegenerate.\\
Consider $(\mathscr{K},\mathscr{K}')$ as a $\Phi$-invariant pair of
subspaces. Any $a\in \mathscr{A}$ can be represented as
$a={\displaystyle{\lim_{n\to \infty}}} \sum_{i=1}^{k_n}
\lambda_{i,n} \rho(x_{i,n})^2$ for some $k_n \in \mathbb{N}$,
$x_{i,n}\in \mathscr{E} $ and $\lambda_{i,n}\in \Bbb{C}$. Hence
\begin{eqnarray*}
\varphi(a) \mathscr{K}={\displaystyle{\lim_{n\to \infty}}}
\sum_{i=1}^{k_n} \lambda_{i,n}\varphi(\rho(x_{i,n}))^2
\mathscr{K}={\displaystyle{\lim_{n\to \infty}}} \sum_{i=1}^{k_n}
\lambda_{i,n}\Phi(x_{i,n})^*\Phi(x_{i,n}) \mathscr{K} \subseteq
\mathscr{K},
\end{eqnarray*}
Hence $\mathscr{K}=0$ or $\mathscr{K}=\mathscr{H}$.\\
If $\mathscr{K}=0$ then $\Phi(\mathscr{E})^*\mathscr{K}'\subseteq \mathscr{K}=0$, and for every $\xi'\in \mathscr{K}'$ we have $0=\langle\Phi(x)^*\xi', \xi\rangle=\langle\xi', \Phi(x)\xi\rangle$ for $x\in \mathscr{E}$ and $\xi\in \mathscr{H}$, so that $\mathscr{K}'\perp \overline{\Phi(\mathscr{E})\mathscr{H}}=\mathscr{H}'$. Since $\mathscr{K}'\subseteq \mathscr{H}'$, we have $\mathscr{K}'=0$.\\
If $\mathscr{K}=\mathscr{H}$, then
$\mathscr{H}'=\overline{\Phi(\mathscr{E})\mathscr{H}}=\overline{\Phi(\mathscr{E})\mathscr{K}}\subseteq
\mathscr{K}'$. Hence $\mathscr{K}'=\mathscr{H}'$. Therefore $\Phi$
is irreducible.\label{11}
\end{proof}

\begin{remark} The result may fail, if the condition of being full is dropped. The closed subspace $P(\mathscr H)$ in Remark \ref{7} when $\varphi: \mathscr A \to \mathbb B (\mathscr H)$ is irreducible provides a counterexample.
\end{remark}

Next we present some conditions under which the quasi-representation
$\Phi$ is nondegenerate and irreducible.
\begin{corollary} Let $\mathscr{E}$ be a full Finsler $\mathscr A$-module and let $\varphi: \mathscr{A}\to \mathbb{B}(\mathscr{H})$ is irreducible.
Then the quasi-representation $\Phi: \mathscr{E}\to
\mathbb{B}(\mathscr{H},\mathscr{K})$ is nondegenerate and
irreducible.
\end{corollary}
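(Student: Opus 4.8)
The plan is to read off both conclusions from the two theorems already established in this section, since the corollary is essentially their combination under the standing hypotheses; no new construction is required.

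First I would dispose of irreducibility. Since $\mathscr{E}$ is full and $\varphi$ is irreducible by assumption, the hypotheses of the second assertion of Theorem \ref{6} are met verbatim, and that assertion yields immediately that $\Phi$ is irreducible. This is the part of the statement that transfers with no extra work, as the full-module implication was proved there precisely for the quasi-representation coming from the construction of Theorem \ref{main}.

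For nondegeneracy I would proceed in two moves. First, observe that $\varphi$, being irreducible, is automatically nondegenerate; this is the standard fact for representations of $C^*$-algebras recorded in \cite[Remark 4.1.4]{mo} and already invoked inside the proof of Theorem \ref{6}. Once $\varphi$ is known to be nondegenerate and $\mathscr{E}$ is full, the second part of Theorem \ref{5} applies and gives at once that $\Phi$ is nondegenerate. Thus both halves of the conclusion are obtained.

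I expect no genuine obstacle in this argument: it is a packaging of the two full-module implications from Theorems \ref{5} and \ref{6}. The only point worth a moment's attention is the passage from irreducibility to nondegeneracy of $\varphi$, which is exactly what allows the fullness hypothesis of Theorem \ref{5} to be brought into play; once that implication is recorded, the nondegeneracy of $\Phi$ follows mechanically, and the corollary is complete.
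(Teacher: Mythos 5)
Your proposal is correct and follows exactly the paper's own argument: the paper likewise notes that irreducibility of $\varphi$ implies its nondegeneracy, then invokes the fullness hypothesis together with Theorem \ref{5} for the nondegeneracy of $\Phi$ and Theorem \ref{6} for its irreducibility. No differences worth noting.
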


\begin{proof} Since $\varphi$ is irreducible, it is nondegenerate. Since $\mathscr{E}$ is full, by Theorem \ref{5}, $\Phi$ is nondegenerate and by Theorem \ref{6}, $\Phi$ is irreducible.
\end{proof}

\begin{theorem} Let $\mathscr{E}$ be a full Finsler $\mathscr{A}$-module. Then $\Phi(\mathscr{E})$ is a subset of the space $\mathbb{K}(\mathscr{H},\mathscr{H}')$ of all compact operators from $\mathscr{H}$ into $\mathscr{H}'$ if and only if $\varphi(\mathscr{A})\subseteq \mathbb{K}(\mathscr{H})$.
\end{theorem}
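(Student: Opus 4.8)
The plan is to reduce the whole statement to the defining identity of a quasi-representation together with a single standard operator-theoretic fact. Recall from the proof of Theorem \ref{main} (equation (\ref{4})) that $(\Phi(x)^*\Phi(x))^{1/2}=\varphi(\rho(x))$; squaring and using that $\varphi$ is a $*$-homomorphism yields the key relation
\[
\Phi(x)^*\Phi(x)=\varphi(\rho(x))^2=\varphi(\rho(x)^2)\qquad(x\in\mathscr{E}).
\]
The other ingredient I would isolate first is the elementary fact that a bounded operator $T\in\mathbb{B}(\mathscr{H},\mathscr{H}')$ is compact if and only if $T^*T\in\mathbb{B}(\mathscr{H})$ is compact. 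One direction is immediate since $\mathbb{K}$ is a closed ideal; for the converse I would pass to $|T|=(T^*T)^{1/2}$, approximate the square-root function uniformly on $[0,\|T^*T\|]$ by polynomials vanishing at $0$ so that $|T|$ lies in the (norm-closed) compact ideal, and then recover $T=U|T|$ from the polar decomposition.

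For the backward implication I would argue as follows. Suppose $\varphi(\mathscr{A})\subseteq\mathbb{K}(\mathscr{H})$. Then for each $x\in\mathscr{E}$ the operator $\Phi(x)^*\Phi(x)=\varphi(\rho(x)^2)$ is compact, and by the auxiliary fact above $\Phi(x)$ itself is compact. Hence $\Phi(\mathscr{E})\subseteq\mathbb{K}(\mathscr{H},\mathscr{H}')$. Note that fullness of $\mathscr{E}$ is not needed for this direction.

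For the forward implication, assume $\Phi(\mathscr{E})\subseteq\mathbb{K}(\mathscr{H},\mathscr{H}')$. Then $\varphi(\rho(x)^2)=\Phi(x)^*\Phi(x)\in\mathbb{K}(\mathscr{H})$ for every $x$, so $\varphi$ maps the linear span of $\{\rho(x)^2:x\in\mathscr{E}\}$ into $\mathbb{K}(\mathscr{H})$. Here fullness enters: given $a\in\mathscr{A}$, write $a=\lim_n\sum_{i=1}^{k_n}\lambda_{i,n}\rho(x_{i,n})^2$ exactly as in the proof of Theorem \ref{6}. Since $\varphi$ is norm-continuous, $\varphi(a)=\lim_n\sum_{i=1}^{k_n}\lambda_{i,n}\varphi(\rho(x_{i,n})^2)$ is a norm limit of compact operators, and because $\mathbb{K}(\mathscr{H})$ is norm-closed we conclude $\varphi(a)\in\mathbb{K}(\mathscr{H})$. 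Hence $\varphi(\mathscr{A})\subseteq\mathbb{K}(\mathscr{H})$.

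The main obstacle is really the auxiliary equivalence ``$T$ compact $\iff$ $T^*T$ compact'', since everything else is a direct application of the identity $\Phi(x)^*\Phi(x)=\varphi(\rho(x)^2)$. Once that equivalence is established, both implications are short; the only remaining subtleties are the use of fullness and the norm-closedness of the compact ideal in the forward direction.
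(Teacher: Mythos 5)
Your proposal is correct, and in one direction it takes a genuinely different route from the paper. For the implication $\varphi(\mathscr{A})\subseteq\mathbb{K}(\mathscr{H})\Rightarrow\Phi(\mathscr{E})\subseteq\mathbb{K}(\mathscr{H},\mathscr{H}')$, the paper invokes the Hewitt--Cohen factorization theorem to write $\Phi(\mathscr{E})=\Phi(\mathscr{E}\mathscr{A})=\Phi(\mathscr{E})\varphi(\mathscr{A})$ and then uses the ideal property of the compacts; you instead use the identity $\Phi(x)^*\Phi(x)=\varphi(\rho(x)^2)$ together with the standard equivalence ``$T$ is compact if and only if $T^*T$ is compact,'' which you prove via functional calculus (polynomials vanishing at $0$ approximating the square root) and polar decomposition. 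Your route is more self-contained and arguably safer: it avoids having to verify that a Finsler module is an essential Banach $\mathscr{A}$-module (the hypothesis needed for Hewitt--Cohen), and it makes visible that fullness plays no role in this direction. What the paper's factorization argument buys is brevity, at the cost of leaning on a nontrivial module-theoretic fact. For the converse implication your argument (fullness, norm continuity of $\varphi$, and norm-closedness of $\mathbb{K}(\mathscr{H})$) coincides with the paper's. No gaps.
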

\begin{proof} Suppose $\varphi(\mathscr{A})\subseteq \mathbb{K}(\mathscr{H})$. Applying the Hewitt--Cohen factorization theorem we have $\Phi(\mathscr{E})=\Phi(\mathscr{E}\mathscr{A})=\Phi(\mathscr{E})\varphi(\mathscr{A}) \subseteq \mathbb{K}(\mathscr{H},\mathscr{H}')$.\\
Conversely, suppose that $\Phi(\mathscr{E})\subseteq
\mathbb{K}(\mathscr{H},\mathscr{H}')$. Since $\mathscr{E}$ is full
we have
\begin{eqnarray*}
\varphi(a)={\displaystyle{\lim_{n\to \infty}}} \sum_{i=1}^{k_n}
\lambda_{i,n}\varphi(\rho(x_{i,n}))^2 ={\displaystyle{\lim_{n\to
\infty}}} \sum_{i=1}^{k_n}
\lambda_{i,n}\Phi(x_{i,n})^*\Phi(x_{i,n})\in
\mathbb{K}(\mathscr{H}),
\end{eqnarray*}
where $a={\displaystyle{\lim_{n\to \infty}}} \sum_{i=1}^{k_n}
\lambda_{i,n} \rho(x_{i,n})^2$ for some $k_n \in \mathbb{N}$,
$x_{i,n}\in \mathscr{E} $ and $\lambda_{i,n}\in \Bbb{C}$.
\end{proof}

Now in the next two examples we illustrate the considered situations
in the notation of Theorem 2.3.

\begin{example} By \cite[Theorem 1.10.2]{d} the identity map $\varphi:
\mathbb{K}(\mathscr{H})\to \mathbb{B}(\mathscr{H})$ is irreducible.
It is known that the $C^*$-algebra $\mathbb{K}(\mathscr{H})$ is a
full Finsler module over $\mathbb{K}(\mathscr{H})$ with
$\rho(T)=\vert T\vert$. Hence the quasi-representation $\Phi:
\mathbb{K}(\mathscr{H})\to \mathbb{B}(\mathscr{H},\mathscr{K})$ is
nondegenerate and irreducible.
\end{example}

\begin{example}
Consider $\varphi=I: \mathbb{B}(\mathscr{H})\to
\mathbb{B}(\mathscr{H})$. Then
$\varphi(\mathbb{B}(\mathscr{H}))^c=\{T\in
\mathbb{B}(\mathscr{H})~;~\varphi(S)T=T\varphi(S), \mbox{ for all }
S\in \mathbb{B}(\mathscr{H})\}=\{T\in
\mathbb{B}(\mathscr{H})~;~ST=TS, \mbox{ for all } S\in
\mathbb{B}(\mathscr{H})\}=\Bbb{C}I$. Hence $\varphi$ is irreducible.
Also $\mathbb{B}(\mathscr{H})$ is a full Finsler
$\mathbb{B}(\mathscr{H})$-module, so that the quasi-representation
$\Phi: \mathbb{B}(\mathscr{H})\to
\mathbb{B}(\mathscr{H},\mathscr{K})$ is nondegenerate and
irreducible.
\end{example}

\textbf{Acknowledgement.} The authors would like to sincerely thank
Professor M. Joi\c{t}a for some useful comments improving the paper.


\begin{thebibliography}{99}

\bibitem{M} \textsc{M. Amyari, A. Niknam}, On homomorphisms of Finsler modules, \textit{Inter. Math. J.} \textbf{3} (2003), 277--281.

\bibitem{lji} \textsc{Lj. Aramba\v{s}i\'c}, Irreducible representations of Hilbert $C^*$-modules, \textit{Math. Proc. R. Ir. Acad.} \textbf {105A} (2005),
no. 2, 11--24.

\bibitem{As} \textsc{M.B. Asadi}, Stinespring's theorem for Hilbert $C^*$-modules, \textit{J. Operator Theory} \textbf{62} (2008), no. 2, 235--238.

\bibitem{Bha}\textsc{ B.V.R. Bhat, G. Ramesh, K. Sumsh}, Stinespring's theorem for maps on Hilbert $C^*$-modules, \textit{J. Operator Theory}, to appear.

\bibitem{d} \textsc{K.R. Davidson}, $C^*$-algebra by Example, Fields Institute Monographs, 6, \textit{Amer. Math. Soc., Providence. RI}, 1996.

\bibitem{JOI1} \textsc{M. Joi\c{t}a}, Strict completely positive maps between locally $C^*$-algebras and representations on Hilbert modules, \textit{J. London Math. Soc.} (2) 66 (2002), no. 2, 421–-432.

\bibitem{MT} \textsc{V.M. Manuilov, E.V. Troitsky}, Hilbert $C^*$-modules. Translated from the 2001 Russian original by the authors. Translations of Mathematical Monographs, 226. \textit{American Mathematical Society}, Providence, RI, 2005.

\bibitem{mo} \textsc{G.J. Murphy}, $C^{*}$-Algebras and Operator Theory, \textit{Academic press}, New York, 1990.

\bibitem{mor} \textsc{G.J. Murphy}, Positive Definite Kernels and Hilbert $C^*$-Modules, \textit{Proc. Edinburgh Math. Soc.} \textbf{40(2)}(1997), 367--374.

\bibitem{ph} \textsc{N.C. Phillips, N. Weaver}, Modules with norms which take values in a $C^*$-algebra, \textit{Pacific J. Math.} \textbf{185} (1998), no. 1, 163--181.

\bibitem{paul} \textsc{V. Paulsen}, Completely bounded maps and operator algebras, Cambridge studies in Advanced Mathematics, vol. 78, Cambridge University Press, Cambridge, 2002.

\bibitem{po} \textsc{R.T. Powers}, Self-adjoint algebras of unbounded operators, \textit{Commun. Math. Phys.} \textbf{21} (1971), 85--124.

\bibitem{ske} \textsc{M. Skeide}, Generalised matrix $C^*$-algebras and representation of Hilbert modules, \textit{Math. Proc. R. Ir.
Acad.} \textbf{100 A} (2000), no. 1, 11--38.

\bibitem{st} \textsc{W.F. Stinespring}, Positive functions on $C^*$-algebras, \textit{Proc. Amer. Math. Sci.} \textbf{6} (1955), 211--216.


\end{thebibliography}
\end{document}